\newcommand{\Chi}{\mbox{\large$\chi$}}
\newtheorem{definition}{Definition}[section]
\newtheorem{theorem}{Theorem}[section]
\newtheorem{corollary}{Corollary}[theorem]
\newtheorem{proposition}[theorem]{Proposition}
\newtheorem{lemma}[theorem]{Lemma}
\newlist{level}{itemize}{4}
\setlist[level]{label={},noitemsep,topsep=0pt}
\newcounter{algo}
\renewcommand{\thealgo}{\arabic{section}.\arabic{algo}}
\newenvironment{algorithm}[1]{%
    \refstepcounter{algo}%
    \paragraph{Algorithm \thealgo}#1%
    \vspace{2pt}\hrule\vspace{5pt}%
    \begin{level}
}{%
    \end{level}%
    \vspace{5pt}\hrule\vspace{\baselineskip}%
}
\def\blackbox{\leavevmode\vrule height 5pt width 4pt depth 0pt\relax}
\newenvironment{proof}{\begin{trivlist}
\item[]\hspace{0cm}{\bf Proof:}\hskip -5pt
\hspace{0cm} }{\hfill $\blackbox$
\end{trivlist}}
\begin{document}

\title{Reducing the Gibbs effect in multimodal medical imaging by the Fake Nodes Approach}


\author{Davide Poggiali, Diego Cecchin, Stefano De Marchi}

\maketitle
\begin{abstract}
It is a common practice in multimodal medical imaging to undersample the anatomically-derived segmentation images to measure the mean activity of a co-acquired functional image. This practice avoids the resampling-related Gibbs effect that would occur in oversampling the functional image. As sides effect, waste of time and efforts are produced since the anatomical segmentation at full resolution is performed in many hours of computations or manual work.\\
In this work we explain the commonly-used resampling methods and give errors bound in the cases of continuous and discontinuous signals. Then we propose a Fake Nodes scheme for image resampling designed to reduce the Gibbs effect when oversampling the functional image. This new approach is compared to the traditional counterpart in two significant experiments, both showing that Fake Nodes resampling gives smaller errors.
\end{abstract}

\section{Introduction}
In nowadays medical imaging it is becoming more and more popular the usage of multimodal imaging. The typical setup of a multimodal imaging system allows to acquire simultaneously both anatomical and functional images of a physical body~\cite{Ehman2017, Zhang2017}, as for instance SPECT/CT\footnote{SPECT stands for \textit{Single Photon Emission Tomography}, CT for \textit{Computed Tomography}.}, PET/MRI\footnote{\textit{Positron Emission Tomography} and \textit{Magnetic Resonance Imaging}, respectively.} or PET/CT. Anatomical imaging (e.g. CT, MRI) aims to picture accurately the interiors of the body and offers a high spatial resolution, around $1 mm^3$ per voxel with the present day's machines. To the other side functional imaging (SPECT, PET in our example) aims to show physiological activity of the body under examination, offering an high sensitivity at the price of a lower spatial resolution (about $8-27 mm^3$ per voxel) than the morphological counterpart.\\
In this context, in many clinical studies researchers need to measure some statistical moments (usually the mean or median) of the functional activity inside some specific segment previously identified from the anatomical image, see e.g.~\cite{Cecchin2017}. In order to get such measure it is necessary that the anatomical and functional image have the exact same shape, which ensures that the position of all the voxels of both images correspond. There are usually two ways to achieve that:
\begin{enumerate}
    \item oversampling the functional image to the same size of the anatomical,
    \item undersampling the anatomical image and the segments to the same size of the functional image~\cite{Tustison2014, Dumitrescu2019}.

\end{enumerate}
In a recent paper~\cite{pog_gibbs21} we showed that the latter is preferable due to the Gibbs effect that occurs in oversampling. However this approach is a waste of the time and efforts that are spent in producing accurate segments at full resolution~\cite{Delgado2014}. For these reasons we have been looking for a Gibbs-free oversampling technique. \\

Inspired by the Fake Nodes Approach introduced in \cite{DeMarchi2020, DeMarchi2021}, in this work we propose a Fake Nodes-based interpolation technique that allows to oversample the functional image that takes into account the segments at full resolution, resulting in a remarkable reduction of the error due to the Gibbs effect. After a description of the interpolation scheme behind the Fake Nodes Approach, we discuss some theoretical results and some experiments.

The paper is organized as follows. After this Introduction, in Section~2 we recall the interpolation methods currently used in image resampling and give an error bound for these methods. In the Section~3 we describe the Fake Nodes interpolation and the new scheme based on Fake Nodes in multimodal image resampling. In Section~4 we describe two significant experiments to compare a commonly used resampling method with the Fake Nodes-based approach. The first experiment is {\it in silico} using an analytically-defined test image. The second one is {\it in vitro}, using a set of PET/CT scans of a phantom for with known  foreground to background ratio.\\
In the last section we conclude outlining some future research directions.

\section{Interpolation methods for image resampling}

\subsection{Image definition}
An image is given by the signal intensity at a given point in space. Mathematically we may define an {\bf image function}
\begin{equation}
    f: \Omega \subseteq \mathbb{R}^3 \longrightarrow \mathbb{R}
    \label{cimage}
\end{equation}
where the domain is a parallelepiped $\Omega = [a_1,b_1]\times [a_2,b_2]\times[a_3,b_3]$. In practice the information on an image function are the its values sampled over a regular and equispaced grid

\begin{equation}
     X = \left\lbrace (\bm x_{ijk}) = (x_i, y_j, z_k) \in \Omega \right\rbrace \;\;\;\; \text{ with }\; \;\;
\begin{array}{c}
i = 1, \ldots,  n_x\\
j = 1, \ldots, n_y \\
k = 1, \ldots, n_z
\end{array}.
\label{grid}
\end{equation}

The values are stored in the matrix $F_{ijk} = f(\bm x_{ijk})$ which is indeed $f(X)$. From now on, $F$ is referred simply as the {\bf image}. For the sake of simplicity and without loss of generality we may consider the cube $\Omega = [a,b]^3$ equally-sampled along the axes, i.e. $n_x=n_y=n_z=n$, so that $x_i = y_i = z_i = \frac{b-a}{n-1}(i-1) + a$, $i = 1, \ldots, n$.

\subsection{Interpolation for image resampling}
Resampling an image to another equispaced grid, say $X'$, is equivalent to find $f(X')$, which is in general different from $f(X)$. This can be done by interpolation. Let $\mathcal{P}f$ be the interpolant
of $f$.

\begin{definition}
Let $f$ be a $d$-variate function known on a set of $n$ distinct points $X$. Choose a set of $n$ linearly independent $B=\{b_1, \ldots, b_N\}$, we define the {\bf interpolant} over the sets $X, f(X)$, the sum
\[\mathcal{P}f = \sum_{i=1}^N c_i b_i\]
with $c_i\in\mathbb{R}$ found by imposing the conditions

\begin{equation}
\mathcal{P}f_{|X} = f_{|X}\,.
\label{interpcond}
\end{equation}

The interpolant is unique provided that
\[\det \left[b_i(x_j)\right]_{i,j = 1,\ldots,N} \neq 0.\]

\label{def_intepolation}
\end{definition}

With the notation $\mathcal{P}f_{|X} = f_{|X}$ we mean $\mathcal{P}f(\bar x) = f(\bar x)\,\,\; \forall x\in X.$

In the case of image resampling, we choose the interpolant $\mathcal{P}f$ as the (possibly unique) linear combination of a chosen set of basis functions
$$\mathcal{B} = \{B_{ijk}\}_{i,j,k = 1, \ldots, n}$$
such that
$$\mathcal{P}f(\bm x_{ijk}) = f(\bm x_{ijk})\,\,\; \forall i,j,k = 1, \ldots, n.$$
To compute the resampled image it is sufficient to evaluate the interpolant over the evaluation grid $\mathcal{P}f(X')$ and store its value in a 3-dimensional array..

In image resampling, since the number of voxels $n^3$ can be extremely large, it is necessary to make some assumptions on the basis functions to compute the resampling image in a relatively short time.
Accordingly with ~\cite{burger_burge_2009, Getreuer2011, pog_gibbs21}, the basis should satisfy the following characteristics.
\begin{itemize}
\item[I)] {\bf Separable}: given a function $b$ the basis $B_{ijk}$ is then the product of translates of $b$ at the samples, that is
\begin{equation}
    B_{ijk}(x,y,z) := b(x-x_i)\, b(y-y_j) \,b(z-z_k),
\label{separable}
\end{equation}
    $x\in\Omega,\; \; i,j,k = 1, \ldots, n.$
\item[II)] {\bf Cardinal}:
    \begin{equation}
  B_{ijk}(\bm x_{i'j'k'})= \left\lbrace\begin{array}{lc}
  1 & \text{if } (i,j,k)=(i',j',k')\\
  0 & \text{else}
  \end{array}\right.
    \end{equation}

\item[III)] {\bf Normalized}:
\begin{equation}
    \sum_{ijk=1}^n B_{ijk}(\bm x ) = 1 \;\;\; \forall \bm x\in [a,b]^3.
\end{equation}

\item[IV)] {\bf Compact support}:
\begin{equation}
\exists \alpha \in \mathbb{R}^{+} \text{ s.t. }
    B_{ijk}(\bm x) = 0 \;\;\forall \|\bm x  -\bm x _{ijk}\|_{\infty}\ge \alpha.
\end{equation}

\end{itemize}
The reader can find some examples of basis functions satisfying all these characteristics in~\cite{burger_burge_2009, pog_gibbs21}. For instance the so-called {\it trilinear interpolation} corresponds to choose the function $b$ as
\[b(x) = \left\lbrace \begin{array}{cc}
    1 - \frac{|x|}{h} & \text{ if } |x|\leq h \\
   0  & \text{otherwise}
\end{array} \right.\]
with $h = \frac{b-a}{n}$ the distance between two samples, and to define the basis functions $B_{ijk}$ by consequence as in Eq.~\eqref{separable}.\\
Under the hypothesis I) we can write the interpolant function as

\begin{equation}
    \mathcal{P} f (\bm x) = \sum_{ijk = 1}^ n f(\bm x_{ijk}) B_{ijk}(\bm x).
\label{interpolant}
\end{equation}

Such interpolant can be quickly computed as the result of three subsequent tensor products as the following holds for hypothesis II)
\begin{equation}
    \mathcal{P} f (x, y, z) = \sum_{k=1}^n \sum_{j = 1}^n \sum_{i = 1} ^ n F_{ijk}  b(x-x_i)\, b(y-y_j) \,b(z-z_k)
\end{equation}

or, in Einstein's summation convention
\[ \mathcal{P} f (x, y, z) =  F_{ijk}  b(x-x_i)\, b(y-y_j) \,b(z-z_k). \]

It has to be noticed that such formulation of image resampling and the theoretical results presented in this work can be easily extended to any dimension.

In particular, it is possible to speed up the computation of the interpolant, as it is shown by the following Lemma, whose proof is given in~\cite{pog_gibbs21}.

\begin{lemma}
If $\mathcal{P}f$ is an interpolant of $f$ satisfying I), II) and IV) then, for every evaluation point $(x,y,z) \in [x_{p},x_{p+1}]\times[y_q, y_{q+1}]\times[z_{r}, z_{r+1}]$ it exists a strictly positive integer $a\in\mathbb{N}$ such that

\begin{equation}
    \mathcal{P} f (x, y, z) =  \sum_{k=max(1, r-a)}^{min(n,r+a-1)}\sum_{j=max(1, q-a)}^{min(n,q+a-1)} \sum_{i=max(1, p-a)}^{min(n,p+a-1)} F_{ijk}  b(x-x_i)\, b(y-y_j) \,b(z-z_k).
\end{equation}
\label{lemma1}
\end{lemma}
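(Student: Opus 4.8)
The plan is to begin from the expanded product form of the interpolant and then simply drop the summands that the compact-support hypothesis forces to vanish. Combining separability I) with cardinality II), the interpolant already established in Eq.~\eqref{interpolant} reads, for a generic evaluation point $\bm x = (x,y,z)$,
\[
\mathcal{P}f(x,y,z) = \sum_{i,j,k=1}^{n} F_{ijk}\, b(x-x_i)\,b(y-y_j)\,b(z-z_k),
\]
so the content of the lemma is to show that, once $\bm x$ is confined to the cell $[x_p,x_{p+1}]\times[y_q,y_{q+1}]\times[z_r,z_{r+1}]$, only indices $(i,j,k)$ in a fixed window around $(p,q,r)$ can contribute, and to exhibit a suitable integer for that window.

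The decisive observation is that $\|\cdot\|_\infty$ is the maximum of the three coordinate distances. Thus, as soon as any one of $|x-x_i|$, $|y-y_j|$, $|z-z_k|$ reaches $\alpha$, we already have $\|\bm x-\bm x_{ijk}\|_\infty\geq\alpha$, and hypothesis IV) forces $B_{ijk}(\bm x)=0$. Consequently a summand can be nonzero only when the three strict inequalities $|x-x_i|<\alpha$, $|y-y_j|<\alpha$ and $|z-z_k|<\alpha$ hold at once; no information about the explicit profile of $b$ is required.

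It then remains to translate each scalar condition into an index bound. Let $h$ denote the common grid spacing, so that $x_i-x_p=(i-p)h$, and recall $x_p\leq x\leq x_{p+1}=x_p+h$. As $x$ ranges over the cell, $x-x_i$ ranges over $[(p-i)h,(p+1-i)h]$, so $|x-x_i|<\alpha$ can hold for some admissible $x$ only if $p-\alpha/h<i<p+1+\alpha/h$; the identical computation bounds $j$ around $q$ and $k$ around $r$, and because the cube is sampled with the same spacing along every axis a single integer serves all three directions. Taking, for instance, $a=\lceil\alpha/h\rceil+1$, every surviving index lies in $\{p-a,\dots,p+a-1\}$ (respectively around $q$ and $r$), which is exactly the summation range of the statement after intersecting with the admissible set $\{1,\dots,n\}$ via the $\max(1,\cdot)$ and $\min(n,\cdot)$ clipping. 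Since every discarded term is zero, the truncated triple sum equals the full one.

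I expect the only delicate point to be the index bookkeeping of this last step: one must check that the \emph{asymmetric} window $[p-a,\,p+a-1]$, rather than a symmetric one, captures every surviving index — a direct consequence of $\bm x$ lying in the left-anchored cell $[x_p,x_{p+1}]$ — and keep the resulting off-by-one under control. Because the statement only claims the existence of such an $a$, it is harmless to choose a slightly generous value, as in the $+1$ above, which removes any genuine difficulty.
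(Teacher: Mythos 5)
Your argument is correct: reducing via the $\|\cdot\|_\infty$ characterization of IV) to three one-dimensional index constraints and exhibiting the explicit choice $a=\lceil\alpha/h\rceil+1$ is exactly the standard route, and your index bookkeeping (including the asymmetric window $[p-a,p+a-1]$ coming from the left-anchored cell) checks out. Note that the paper itself does not prove Lemma~\ref{lemma1} but defers to~\cite{pog_gibbs21}, so there is no in-paper proof to diverge from; your proof is the natural self-contained one.
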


This means that the local compactness of the support of the basis functions ensures that in each dimension only the $a$ nodes before and after the evaluation point have an active role in evaluating the interpolant.

\subsection{Error bound for image resampling}

In this section we give an error bound for the image interpolation which relies on the {\it modulus of continuity} of a function~\cite{Bugajewski2015, Aronszajn1956, Costarelli2018, Costarelli2019}.

\begin{definition}
Let $f: \Omega \subseteq \mathbb{R}^d \longrightarrow \mathbb{R}$ be a piecewise continuous function. The function $f$ admits {\bf a modulus of continuity}  $\omega_{\bm x_0}(\delta): \mathbb{R}^{+} \longrightarrow \mathbb{R}^{+}$
in $\bm x_0\in\Omega$ if
\[ |f(\bm x) - f(\bm x_0) | \leq \omega_{\bm x_0}(\delta) \;\;\forall \bm x \in \Omega \text{ s.t. } \|\bm x- \bm x_0\|\leq \delta. \]
where $\|\cdot\|$ is an arbitrary norm.
\end{definition}
The definition of modulus of continuity given here is slightly relaxed from the one given in literature in order to include piecewise continuous functions. Namely, in our formulation it is not necessarily true that
\[\lim_{\delta\to 0}\omega_{\bm x_0}(\delta) = 0.\]

We can extend this definition and consider the {\bf global modulus of continuity}, say $\omega(\delta)$,
\[ |f(\bm x) - f(\bm y) | \leq \omega(\delta) \;\;\forall \bm x, \bm y \in \Omega \text{ s.t. } \|\bm x- \bm y\|\leq \delta. \]
It is noteworthy to recall that if a function admits a local modulus of continuity for every $\bm x_0\in \Omega$, then admits a global modulus of continuity
$$\omega(\delta) :=\sup _{\bm x_0 \in \Omega} \omega_{\bm x_0}(\delta).$$

We now enounce the definition of discontinuity jumps, that will be useful for quantifying the discontinuities of a picewise continuous signal.
\begin{definition}
Let $f:\Omega \subseteq\mathbb{R}^3 \longrightarrow \mathbb{R}$ picewise continuous on some pairwise disjoint subdomains
\[ \Omega = \overset{\circ}{\bigcup}_{i=1}^m  \Omega_i\]
the {\bf jump} between two subdomains $\Omega_i,\Omega_j, \;i,j = 1, \ldots,m$ is the positive real number defined as

\[ D(\Omega_i,\Omega_j) := \sup \left(J_{ij}  \bigcup J_{ji}\right)\]

where
\[J_{ij} = \left\lbrace \lim_{\bm x\to \bm y}|f(\bm x) - f(\bm y)| \;,\;\;\bm x\in \Omega_i, \bm y\in \partial_i\Omega_j  \right\rbrace\]
with $\partial_i\Omega_j = \partial \Omega_i \bigcap \Omega_j$ is the part of the border of $\Omega_i$ that belongs to $\Omega_j$.

In the special case that $\partial\Omega_i \cap \partial\Omega_j = \emptyset $, and by consequence also $J_{ij}$ and $J_{ji}$ are empty sets, the jump is considered equal to zero.
\label{jump}
\end{definition}

We now enounce two useful Lemmas for the estimation of the error at a given evaluation point.

\begin{lemma}
A $d$-variate, Lipschitz-continuous function $f$ admits a modulus of continuity of the form
\[\omega(\delta) = K\delta \]
being $K$ the Lipschitz constant.
\label{lemma_cont}
\end{lemma}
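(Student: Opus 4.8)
The plan is to unfold both definitions and chain a single inequality. By hypothesis $f$ is Lipschitz-continuous with constant $K$, which means
\[ |f(\bm x) - f(\bm y)| \leq K \|\bm x - \bm y\| \qquad \forall\, \bm x, \bm y \in \Omega, \]
where $\|\cdot\|$ is the norm with respect to which the Lipschitz bound is assumed. The goal is to verify that the function $\omega(\delta) = K\delta$ fulfills the defining property of the global modulus of continuity, namely that $|f(\bm x) - f(\bm y)| \leq \omega(\delta)$ holds for every pair $\bm x, \bm y \in \Omega$ with $\|\bm x - \bm y\| \leq \delta$.

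First I would fix an arbitrary $\delta > 0$ and an arbitrary pair $\bm x, \bm y \in \Omega$ satisfying $\|\bm x - \bm y\| \leq \delta$. The one substantive step is then the chaining
\[ |f(\bm x) - f(\bm y)| \leq K \|\bm x - \bm y\| \leq K \delta, \]
where the first inequality is the Lipschitz condition and the second follows because $K \geq 0$ is a nonnegative constant, so multiplication preserves the bound $\|\bm x - \bm y\| \leq \delta$. Since the right-hand side equals $\omega(\delta)$ and the pair $\bm x, \bm y$ was arbitrary, this is exactly the definition of a global modulus of continuity, completing the argument.

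I do not expect any genuine obstacle here, as the result is essentially a restatement of the Lipschitz inequality in the language of the modulus of continuity; the only point requiring care is consistency of the norm. Since both the Lipschitz condition and the modulus of continuity are formulated with respect to an arbitrary norm, it suffices to employ the \emph{same} norm in both, and the proof goes through verbatim. Should one instead wish to pass between two different norms, the equivalence of norms on the finite-dimensional space $\mathbb{R}^d$ would merely rescale $K$ by a fixed constant factor, still yielding a modulus of the linear form $\tilde{K}\delta$.
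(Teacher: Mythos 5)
Your argument is correct and is exactly the one the paper has in mind: the paper dispenses with this lemma in a single sentence, noting that it ``follows trivially from the definitions of Lipschitz continuity and global modulus of continuity,'' and your write-up simply makes that chaining explicit. The remark on norm consistency is a sensible, if optional, addition.
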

The proof follows trivially from the definitions of Lipschitz continuity and global modulus of continuity.

\begin{lemma}
A picewise $d$-variate, Lipschitz-continuous function $f$ admits a modulus of continuity of the form
\[\omega(\delta) = K\delta + D \]

\label{lemma_disc}
\end{lemma}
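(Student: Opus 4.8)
The plan is to reduce to the two scenarios that can occur for a pair of points $\bm x, \bm y \in \Omega$ with $\|\bm x - \bm y\| \le \delta$: either both lie in the same continuity piece, or they straddle a discontinuity. Set $K := \max_{i} K_i$, the largest of the Lipschitz constants of $f$ restricted to the pieces $\Omega_1, \dots, \Omega_m$, and $D := \max_{i \ne j} D(\Omega_i, \Omega_j)$, the largest jump in the sense of Definition~\ref{jump}. Proving $|f(\bm x) - f(\bm y)| \le K\delta + D$ for every such pair yields a global modulus of continuity $\omega(\delta) = K\delta + D$ directly from the sup-characterization $\omega(\delta) = \sup_{\bm x_0} \omega_{\bm x_0}(\delta)$ recalled above.

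First I would dispatch the easy case in which $\bm x$ and $\bm y$ belong to the same subdomain $\Omega_i$. Here $f$ is genuinely Lipschitz with constant $K_i \le K$, so Lemma~\ref{lemma_cont} gives $|f(\bm x) - f(\bm y)| \le K_i \|\bm x - \bm y\| \le K\delta$, and since the jump $D$ is by definition nonnegative we obtain the desired bound $K\delta \le K\delta + D$ with room to spare.

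The substantive case is when $\bm x \in \Omega_i$ and $\bm y \in \Omega_j$ with $i \ne j$. Because each restriction $f|_{\Omega_i}$ is Lipschitz it extends continuously to the closure $\overline{\Omega_i}$; denote these extensions $f_i, f_j$. I would then select a point $\bm p$ on the shared boundary $\partial\Omega_i \cap \partial\Omega_j$ lying on (or arbitrarily close to) the segment joining $\bm x$ and $\bm y$, and insert it via the triangle inequality:
\[ |f(\bm x) - f(\bm y)| \le |f_i(\bm x) - f_i(\bm p)| + |f_i(\bm p) - f_j(\bm p)| + |f_j(\bm p) - f(\bm y)|. \]
The first and third terms are controlled by the Lipschitz estimates $K\|\bm x - \bm p\|$ and $K\|\bm p - \bm y\|$; since $\bm p$ sits on the segment, their lengths add up to $\|\bm x - \bm y\| \le \delta$, contributing at most $K\delta$. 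The middle term is precisely a one-sided limit of $|f_i - f_j|$ at a point of $\partial_i\Omega_j$, hence bounded by $D(\Omega_i,\Omega_j) \le D$ through Definition~\ref{jump}. Combining the three estimates closes the argument.

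The main obstacle I anticipate is geometric rather than analytic: guaranteeing that a single boundary point $\bm p$ suffices. If the segment $[\bm x,\bm y]$ passes through several pieces, the telescoping triangle inequality accumulates one jump per crossing, so the clean bound $K\delta + D$ genuinely requires that only one discontinuity is straddled --- which is the relevant regime for the local modulus $\omega_{\bm x_0}(\delta)$ at a point $\bm x_0$ adjacent to a single interface and for $\delta$ small. This is also where the relaxed notion of modulus of continuity pays off: at such a boundary point the limit $\lim_{\delta\to 0}\omega_{\bm x_0}(\delta) = D \ne 0$, exactly the additive constant appearing in the statement. A secondary technical point is making the choice of $\bm p$ and the one-sided limits rigorous under the paper's boundary conventions, which I would handle by working throughout with the continuous extensions $f_i$ on the closures.
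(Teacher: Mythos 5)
Your core argument is the same as the paper's: split into the same-piece and different-piece cases, and in the latter insert a boundary point via the triangle inequality so that the two outer terms are Lipschitz-controlled and the middle term is bounded by the jump $D(\Omega_i,\Omega_j)$ of Definition~\ref{jump}. In the two-piece case your version is in fact slightly sharper --- by placing $\bm p$ on the segment you get $K = \max(k_i,k_j)$, whereas the paper bounds each outer term by $k\delta$ separately and ends up with $K = k_1 + k_2$. Your use of continuous extensions to the closures is a reasonable way to make the paper's limit $\bm\xi_1\to\bm\xi_2$ rigorous.

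The one place you stop short is the ``obstacle'' you flag at the end: you conclude that the clean bound $K\delta + D$ genuinely requires that only one interface be straddled. That is not needed, because the lemma only asserts the \emph{existence} of constants $K$ and $D$, not any particular value. The paper closes the multi-piece case exactly by letting the constants absorb the number of crossings: telescoping the triangle inequality over every subdomain met by the segment from $\bm x$ to $\bm y$ (at most $m$ of them) yields the admissible choice $K = m\,\max_i k_i$ and $D = m\,\max_{i,j} D(\Omega_i,\Omega_j)$. So rather than restricting to the single-interface regime, you should simply accept the worse constants; with that observation your argument covers the full statement.
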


\begin{proof}
Suppose for simplicity that the function domain is divided in only two disjoint subdomains $\Omega = \Omega_1 \;\overset{\circ}{\bigcup}\;
\Omega_2$ in both of whom $f$ is Lipschitz-continuous with respective constants $k_1, k_2$.

Consider now two generic points $x,y\in\Omega$. If $x$ and $y$ both belong to the same subdomain the thesis follows by Lemma~\ref{lemma_cont}.

Otherwise, once recalled the symbols used in Definition~\ref{jump}, for any choice of $\bm\xi_1 \in \Omega_1$ and $\bm\xi_2 \in \partial_1\Omega_2$ we get
\[ |f(\bm x) - f(\bm y)| \leq  |f(\bm x) - f(\bm \xi_1)| + |f(\bm \xi_1) - f(\bm \xi_2)| + |f(\bm \xi_2) - f(\bm y)|.\]
This inequality still true for the limit
\[|f(\bm x) - f(\bm y)|\leq \lim_{\xi_1 \to\xi_2} \left\{ |f(\bm x) - f(\bm \xi_1)| + |f(\bm \xi_1) - f(\bm \xi_2)| + |f(\bm \xi_2) - f(\bm y)|\right\} \leq\]
\[\leq k_1\delta + D(\Omega_1, \Omega_2) + k_2\delta.\]

The thesis follows with $D = D(\Omega_1, \Omega_2)$ and $K = k_1 + k_2$, as  $\forall \bm x, \bm y \in \Omega \text{ s.t. } \|\bm x- \bm y\|\leq \delta$. This inequality holds also in case $\partial_1\Omega_2$ is an empty set, in which case we consider $\bm\xi_1 \in \partial_2\Omega_1$, $\bm\xi_2 \in\Omega_2$ and the limit for $\xi_2\to\xi_1$.

In the case of $m>2$ subdomains with  $\partial\Omega_i \cap \partial\Omega_j = \emptyset$, the triangular inequality is repeated for all the subdomains intersecting the straight line between $\bm x \in \Omega_i$ and $\bm y \in \Omega_j$. In the worst case such line touches all the $m$ subdomains hence
\[K = m\;\max_{i=1, \ldots, m} k_i\]
with $k_i, \; i= 1,\ldots,m$ the Lipschitz constants of each subdomain, and
\[D = m\;\max_{i,j =1, \ldots, m} D(\Omega_i,\Omega_j).\]
\end{proof}

We now prove an upper bound for the pointwise interpolation error given by any modulus of continuity.

\begin{theorem}
Let $f$ be a trivariate and bounded function admitting a local modulus of continuity $\omega_{\bm x} (\delta)$. Let $\mathcal{P}f$ be its interpolant built according to I), II), III) and IV). Then, there exits $\delta^* > 0$ such that
\[ |\mathcal{P}f (\bm x) - f(\bm x)| \leq \omega_{\bm x} (\delta^*)\,. \]

\label{error}
\end{theorem}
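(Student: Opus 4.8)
The plan is to exploit the partition-of-unity property III) to rewrite the interpolation error as a weighted combination of pointwise deviations of $f$ at the grid nodes, and then to control each deviation through the local modulus of continuity after using the compact-support property IV) to restrict attention to the finitely many active nodes.

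First I would use III) to write $f(\bm x) = f(\bm x)\sum_{ijk} B_{ijk}(\bm x) = \sum_{ijk} f(\bm x) B_{ijk}(\bm x)$, so that subtracting this from the interpolant representation \eqref{interpolant} gives
\[ \mathcal{P}f(\bm x) - f(\bm x) = \sum_{ijk=1}^n \bigl( f(\bm x_{ijk}) - f(\bm x) \bigr) B_{ijk}(\bm x). \]
Since the grid is finite, the sum poses no convergence issue. Taking absolute values and applying the triangle inequality yields
\[ |\mathcal{P}f(\bm x) - f(\bm x)| \le \sum_{ijk=1}^n | f(\bm x_{ijk}) - f(\bm x) | \, |B_{ijk}(\bm x)|. \]

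Next I would invoke IV) (or, equivalently, the localized form in Lemma~\ref{lemma1}): every term with $\|\bm x - \bm x_{ijk}\|_\infty \ge \alpha$ vanishes, so only nodes lying in the $\infty$-ball of radius $\alpha$ about $\bm x$ survive. Choosing $\delta^*$ equal to $\alpha$ — or, if the modulus is measured in a different norm, equal to the norm-equivalence constant times $\alpha$, which is finite on $\mathbb{R}^3$ — guarantees $\|\bm x - \bm x_{ijk}\| \le \delta^*$ for every surviving node. By the defining property of the local modulus of continuity at $\bm x$ we then have $|f(\bm x_{ijk}) - f(\bm x)| \le \omega_{\bm x}(\delta^*)$ for each active node, so that factoring this constant out gives
\[ |\mathcal{P}f(\bm x) - f(\bm x)| \le \omega_{\bm x}(\delta^*) \sum_{ijk=1}^n |B_{ijk}(\bm x)|. \]

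The step I expect to be the main obstacle — or at least the one that quietly requires a hypothesis not written into the statement — is passing from $\sum_{ijk}|B_{ijk}(\bm x)|$ to $\sum_{ijk} B_{ijk}(\bm x) = 1$. This identity is legitimate only when the basis functions are nonnegative, so that III) expresses a genuine convex combination and $|B_{ijk}| = B_{ijk}$; this is indeed the case for trilinear interpolation and the other examples cited, where the generating $b$ is nonnegative. If signed bases were admitted, the bound would instead pick up the Lebesgue-function factor $\sum_{ijk}|B_{ijk}(\bm x)| \ge 1$ and the clean estimate would fail. I would therefore make the nonnegativity of $b$ explicit as part of the construction (or restrict to that case, which is the one relevant for the resampling schemes considered here), after which III) closes the argument and yields $|\mathcal{P}f(\bm x) - f(\bm x)| \le \omega_{\bm x}(\delta^*)$.
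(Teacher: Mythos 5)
Your proof is correct and follows essentially the same route as the paper's: both expand the error via the partition of unity III), localize to the finitely many active nodes via IV) (Lemma~\ref{lemma1}), apply the local modulus of continuity with $\delta^* = \alpha$, and close using the normalization. The nonnegativity caveat you flag is genuine but is equally present in the paper's own argument --- its step $\mathcal{P}f(\bm x)-f(\bm x)\le\sum_{ijk}\omega_{\bm x}(\delta^*)B_{ijk}(\bm x)$ also silently requires $B_{ijk}\ge 0$ --- so you have not diverged from the paper; you have merely made its implicit hypothesis explicit, which is a point in your favour.
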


\begin{proof}
From Eq.~\eqref{interpolant} and from III) we get
\begin{eqnarray*}
\mathcal{P}f (\bm x) - f(\bm x)& =& \sum_{ijk=1}^n f(\bm x_{ijk}) B_{ijk}(\bm x) - f(\bm x)  \left(\sum_{ijk=1}^n B_{ijk}(\bm x) \right)\\
&=& \sum_{ijk=1}^n \left( f(\bm x_{ijk}) - f(\bm x)\right) B_{ijk}(\bm x).
\end{eqnarray*}
We know from Lemma~\ref{lemma1} that only some nodes in a neighbourhood of $\bm x$ have to be taken into account in the interpolation formula. To be precise the $\bm x_{ijk}$ such that $\|\bm x - \bm x_{ijk}\|_{\infty} < \alpha$. Since $f$ admits a modulus of continuity we can state
\[ \left| f(\bm x_{ijk}) - f(\bm x)\right| \leq \omega_{\bm x} (\delta^*),  \]
with $\delta^* = \alpha$ and $i,j,k$ in the range indicated in Lemma~\ref{lemma1}. By using the normalized basis
\[ \mathcal{P}f (\bm x) - f(\bm x) \leq \sum_{ijk=1}^n \omega_{\bm x} (\delta^*) B_{ijk}(\bm x) = \omega_{\bm x} (\delta^*).
   \]
and similarly
\[ \mathcal{P}f (\bm x) - f(\bm x) \geq - \omega_{\bm x} (\delta^*)\,,
   \]
which proves the Theorem.
\end{proof}

In a similar way we can find a global upper bound to the interpolation error, simply by taking the supremum of the local continuity moduli.

\begin{corollary}
Let $f$ be a trivariate and bounded function admitting a global modulus of continuity $\omega (\delta)$. Let $\mathcal{P}f$ be its interpolant built according to I), II), III) and IV). Then, there exits $\delta^* > 0$ such that
\[ |\mathcal{P}f (\bm x) - f(\bm x)| \leq \omega (\delta^*)\,. \]
\end{corollary}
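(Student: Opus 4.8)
The plan is to obtain this corollary directly from Theorem~\ref{error}, using only the relation between local and global moduli of continuity recalled just before Definition~\ref{jump}. The governing observation is that the global modulus is by construction an upper envelope of the local ones: since $\omega(\delta) = \sup_{\bm x_0 \in \Omega} \omega_{\bm x_0}(\delta)$, we have $\omega_{\bm x}(\delta) \leq \omega(\delta)$ for every $\bm x \in \Omega$ and every $\delta > 0$. The whole argument will consist in applying the pointwise bound of Theorem~\ref{error} and then replacing the local modulus by the (larger) global one.

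First I would check that the hypotheses of Theorem~\ref{error} are met at every point. The assumption that $f$ admits a global modulus $\omega(\delta)$ already guarantees a local modulus at each $\bm x_0$: fixing $\bm y = \bm x_0$ in the defining inequality $|f(\bm x) - f(\bm y)| \leq \omega(\delta)$ shows that $\omega$ itself is a valid local modulus everywhere, so in particular $f$ admits the local modulus required by the Theorem at each point.

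Next I would apply Theorem~\ref{error} pointwise to get $|\mathcal{P}f(\bm x) - f(\bm x)| \leq \omega_{\bm x}(\delta^*)$, where—inheriting the value produced in the proof of that Theorem—$\delta^* = \alpha$ is the compact-support radius of the cardinal basis. Chaining this with $\omega_{\bm x}(\delta^*) \leq \omega(\delta^*)$ gives $|\mathcal{P}f(\bm x) - f(\bm x)| \leq \omega(\delta^*)$ with the same $\delta^* = \alpha$, which is precisely the claim.

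I do not anticipate any genuine obstacle, as the statement is merely a uniform reformulation of Theorem~\ref{error}. The only point deserving care is that the threshold $\delta^*$ must be chosen uniformly over $\Omega$; this is automatic, because $\alpha$ depends only on the support of the basis functions and not on the evaluation point, so a single $\delta^* = \alpha$ works simultaneously for all $\bm x \in \Omega$.
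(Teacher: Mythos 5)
Your proposal is correct and follows essentially the same route as the paper, which justifies the corollary in one line by passing from the local bound of Theorem~\ref{error} to the global modulus via the supremum relation $\omega(\delta)=\sup_{\bm x_0}\omega_{\bm x_0}(\delta)$. Your additional observations---that the global modulus is itself a valid local modulus at every point, and that $\delta^*=\alpha$ is uniform in $\bm x$---are exactly the details the paper leaves implicit.
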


\subsection{Gibbs effect in image oversampling}
We usually refer to Gibbs effect as the oscillations around a discontinuity that occurs when approximating a signal with a truncated Fourier series~\cite{Jerri1998, Fornberg2011}. Such effect is present also in image (re)sampling, often called ``ringing effect'' for the wavelike, concentric oscillations that appears around any sudden change of signal intensity\cite{Lehmann1999, chloa20}. Unlike the {\it Runge effect}, the magnitude of the oscillation goes to a plateau as the number of interpolation nodes goes to infinity. The presence of the Gibbs effect in multimodal medical imaging has already been discussed in~\cite{pog_gibbs21} where a "natural" error analysis has been presented. Similar results can be obtained by the results of the previous section. In fact, we can build the basis functions so that their support is $\alpha \propto \frac{1}{n}$. Hence, the interpolation error bound given in Theorem~\ref{error} goes to zero as $n\to\infty$ if the image function $f$ is Lipschitz-continuous at the evaluation point $\bm x$. By  Lemma~\ref{lemma_cont} indeed the image function admits a continuity modulus $\omega_{\bm x}(\delta) \leq K\delta$ that has the role of an error bound once evaluated on $\alpha$, and goes to zero as $n\to\infty$. On the other hand, if $f$ is only piecewise Lipschitz-continuous and the evaluation point belongs to the border of a subdomain $\Omega_i$, then by Lemma~\ref{lemma_disc} the image function admits a continuity modulus $\omega_{\bm x}(\delta) \leq K\delta + D$ that evaluated on $\alpha$ is an error bound, which converges to $D$ as $n$ increases to infinity.\\
In the next section we introduce the Fake Nodes interpolation, aiming to interpolating a discontinuous image on a mapped set of nodes so that it appears as continuous, getting rid of the Gibbs effect.

\section{Fake Nodes interpolation}
Fake Nodes is an interpolation paradigm introduced in~\cite{DeMarchi2020, DeMarchi2021} that can reduce the Gibbs effect in different frameworks and applications~\cite{DeMarchi2020a,graspa, betagamma}.
\subsection{Definition of Fake Nodes interpolation}
The Fake Nodes interpolant can be defined as in  Def.~\ref{def_intepolation}, with the difference that the interpolation and evaluation nodes are subject to mapping. For reasons that will be clarified soon, in this paper we choose a slightly different definition of Fake Nodes with respect to the above quoted literature.

\begin{definition}
Let $f$ be a $d$-variate function sampled on a set of $n$ distinct nodes $X$. Let the set of functions $\{b_1, \ldots, b_m\}$, $m\geq n$ be a basis. Consider the map
$S: \Theta \subseteq \mathbb{R}^d \to \mathbb{R}^d,$
and $\mathcal{P}f$ the interpolant over $X^S \supseteq S(X)$ to $Y \supseteq f(X)$, $X^S, Y$ both of cardinality $m$. We call {\bf Fake Nodes interpolant} the function
\[\mathcal{R}^Sf := \mathcal{P}f \circ S .\]
\label{def_FNintepolation}
\end{definition}

The following proposition holds.

\begin{proposition}
The Fake nodes interpolant as defined in Def. ~\ref{def_FNintepolation} satisfies the interpolation conditions~\eqref{interpcond}, i.e $\mathcal{R}^Sf_{|X} = f_{|X}$.
\end{proposition}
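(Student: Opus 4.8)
The plan is to unwind the definitions and verify the interpolation identity pointwise on $X$. Fix an arbitrary node $\bar x \in X$. By the very definition of the Fake Nodes interpolant (Definition~\ref{def_FNintepolation}) we have $\mathcal{R}^S f(\bar x) = \mathcal{P}f(S(\bar x))$, so the whole claim reduces to evaluating the ordinary interpolant $\mathcal{P}f$ at the mapped node $S(\bar x)$.

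First I would observe that $S(\bar x)$ is one of the interpolation nodes of $\mathcal{P}f$. Indeed, since $\bar x \in X$ we have $S(\bar x) \in S(X) \subseteq X^S$, and $X^S$ is precisely the node set over which $\mathcal{P}f$ is built. Therefore the interpolation conditions~\eqref{interpcond} satisfied by $\mathcal{P}f$ (in the sense of Definition~\ref{def_intepolation}) apply at the point $S(\bar x)$: the interpolant reproduces there the datum of $Y$ associated to that node. It then remains to identify this datum as $f(\bar x)$. By construction the data set $Y$ contains $f(X)$, and the node--value pairing is the one induced by the inclusions $S(X) \subseteq X^S$ and $f(X) \subseteq Y$, so that the mapped node $S(\bar x)$ carries exactly the value $f(\bar x)$. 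Hence $\mathcal{P}f(S(\bar x)) = f(\bar x)$, and combining with the first step gives $\mathcal{R}^S f(\bar x) = f(\bar x)$. Since $\bar x$ was arbitrary, this is the desired identity $\mathcal{R}^S f_{|X} = f_{|X}$.

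The main obstacle is not the computation, which is essentially a one-line substitution, but making the node--value pairing well defined. This requires that $S$ be injective on $X$: if two distinct nodes of $X$ collapsed to a single fake node while carrying different sampled values, the datum assigned to that node of $X^S$ would be ambiguous and $\mathcal{P}f$ would fail to be a legitimate interpolant in the sense of Definition~\ref{def_intepolation}. I would therefore make explicit that $S_{|X}$ is injective, so that $|S(X)| = n$ and each value $f(\bar x)$ is attached to a unique node $S(\bar x)$; this is exactly what the cardinality bookkeeping $|X^S| = |Y| = m \geq n$ together with $S(X) \subseteq X^S$ is tacitly enforcing. Once this injectivity is acknowledged, the argument closes immediately.
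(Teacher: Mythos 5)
Your proof is correct and follows essentially the same route as the paper's: unwind $\mathcal{R}^S f(\bar x) = \mathcal{P}f(S(\bar x))$ and invoke the fact that $\mathcal{P}f$ interpolates the data on the superset $X^S \supseteq S(X)$ with values $Y \supseteq f(X)$, so that the mapped node carries the value $f(\bar x)$. Your additional remark that $S_{|X}$ must be injective for the node--value pairing to be well defined is a tacit assumption the paper does not state explicitly, and making it explicit is a reasonable refinement rather than a different argument.
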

\begin{proof}
Being $\mathcal{R}^Sf = \mathcal{P}f \circ S$ and the fact that $\mathcal{P}f$ is the interpolant over the whole supersets $X^S, Y$, in particular we get
\[ \mathcal{P}f(S(\bar x)) = f(\bar x) \;\;\forall \bar x \in X, \]
that is
\[\mathcal{R}^Sf(\bar x) = f(\bar x) \;\;\forall \bar x \in X.\]
\end{proof}

\noindent {\bf Remark}. This definition of Fake Nodes differs to classical ones since considers a larger number of basis functions and interpolation nodes. To use the image resampling defined in the previous section, we must have to consider a regular, equispaced grid as input. Since the mapped nodes $S(X)$ is not equispaced in general, we choose to interpolate over a larger grid by adding some nodes in order to obtain a regular and equispaced grid $X^S\supseteq S(X)$.
In correspondence of the newly added nodes, we can choose some values to form the set $Y \supseteq f(X)$. Since we want to avoid the Gibbs effect, which occurs next to a discontinuity, we shall choose such values in order to get a smooth, regular image. Given these choices, it becomes mandatory to use a larger basis of functions.


\subsection{Fake Nodes approach for multimodal image resampling}
As discussed in the Introduction, in multimodal imaging we have a high resolution morphological image $I$ along with its segmentation $M$, composed by integer values indicating the number of segment each voxel belongs. We also have a low resolution functional image $F$, and we aim to estimate the mean value of the functional image inside each segment with the lowest possible error.

The image $I$ is defined on a grid $X_{high}$ of the domain $\Omega = [a,b]^3$. The segmentation is the set of pairwise disjoint subsets of the domain $\Omega$ (cf. e. g. ~\cite{Pham2000}), that is
\[ \Omega = \bigcup_{k=0}^{m} \Omega_k  \;\;\text{ with }\;\; \Omega_i \cap \Omega_j = \varnothing\;\;\forall i,j = 1,\dots,m,\]
and is known up to the resolution of $I$, and stored in the matrix
$$M = \sum_{k=0}^m k\; \Chi_{\Omega_k}(\bm{x}_{ijk})\;\text{ with }\bm x_{ijk} \in X_{high}$$
of the same size of $I$. By convection, the zero-indexed segment represent the background.\\
On the other hand, the functional image $F$ is defined on another, smaller grid of $[a,b]^3$, say $X_{low}$.

We then propose to oversample the functional image $F$ to the dimension of $I$ and $M$ by using a Fake Nodes interpolation with mapping function
\[ S(\bm{x}) = \bm{x} +  \sum_{k=0}^m \bm{\alpha}_k \Chi_{\Omega_k}(\bm{x}), \]
where $\bm{\alpha}_k = ((b-a)k, 0, 0)$.
In other words, we create as many images as the segments, each with the values of the functional image inside the relative segment and zero elsewhere. These images are then stacked over the first axis, creating a long, unique 3-dimensional image.

As stated in the remark, this would be not sufficient to overcome Gibbs effect, as the so obtained signal has still some sudden change of intensity. Since we have the freedom to choose the values on the new nodes, we apply Gaussian blurring on the image and re-impose the original intensity values on the set $S(X_{low})$. This process can be repeated a number of times, until a smooth and continuous-like image is obtained. In the experiments below, we applied this strategy of blurring-reimposing three times. The result of this process can be seen in Fig.~\ref{fig:fakenodes}.\\
The so-obtained image is then oversampled to the same resolution of $I$ and $M$, and then recomposed according to the mapping $S$, as indicated in Def.~\ref{def_FNintepolation}. This method is explained in better detail in the following Algorithm~\ref{FakeNodesAlg}.\\
\begin{algorithm}{Fake Nodes image resampling}
\item \textbf{Inputs:} \\
$I$ the target, high-resolution image;\\
$M$ the segmentation of $I$ in $m+1$ segments;\\
$F$ the input, low-resolution image;
$\mathcal{G}$ a Gaussian function of fixed variance.
\item \textbf{Execution:}
\begin{level} \item
1. Downsample the segmentation image $M$ to the resolution of $F$, call the result $M_{low}$;\\
2. {\bf for} $segm$ in $0$:$m$:
\begin{level}\item
2.1. Impose the values of the current segment to the image at $segm$ level and add the result to a list of images $tempF$:\\ \hspace*{1cm} $tempF[segm] = F[M_{low}==segm]$\\
2.2. {\bf repeat 3 times:}
\begin{level}\item
2.2.1. Gaussian smoothing by convolution:\\ \hspace*{1cm} $tempF[segm] = tempF[segm] \ast \mathcal{G}$\\
2.2.2. Re-imposition: $tempF[segm] = F[M_{low}==segm]$
\end{level}
\end{level}
\end{level}
3. Stack the images $tempF[]$ along the first axis, call the result $fakeF$;\\
4. Oversample the image $fakeF$ to the resolution of $I$ with a basis of choice, call the result $HRfakeF$;\\
5. Initialize the result image $highF$ as a matrix with the size of $I$;\\
6. {\bf for} $segm $in $0$:$m$:
\begin{level}\item
6.1. Impose the values at each segment:\\
\hspace*{1cm} $highF[M==segm] =HRfakeF[M==segm] $
\end{level}

\item \textbf{Output:} \\
$highF$, the resampled version of $F$ using Fake Nodes.
\label{FakeNodesAlg}
\end{algorithm}
{\bf Remark}. The whole algorithm has a computational cost of $m$ times the computational cost of the corresponding resampling algorithm, where $m$ is the number of segments. This means that if the number of segments is too large it would be better to reduce it by merging some segments with similar values at the boundaries.

\section{Experiments and results}
For testing the Fake Nodes resampling and compare it to the usual resampling methods, we carried on two experiments. The first one is made on the Shepp-Logan phantom~\cite{Shepp1974}, an analytically-defined image with constant intensity in each of its ellipsoidal segments. Since the exact value is known, in this case we can compute the resampling errors. An example of Shepp-Logan phantom is reported in Fig.~\ref{fig:shepplogan}.\\

\begin{figure}[!hbt]
    \centering
    \includegraphics[width=\linewidth]{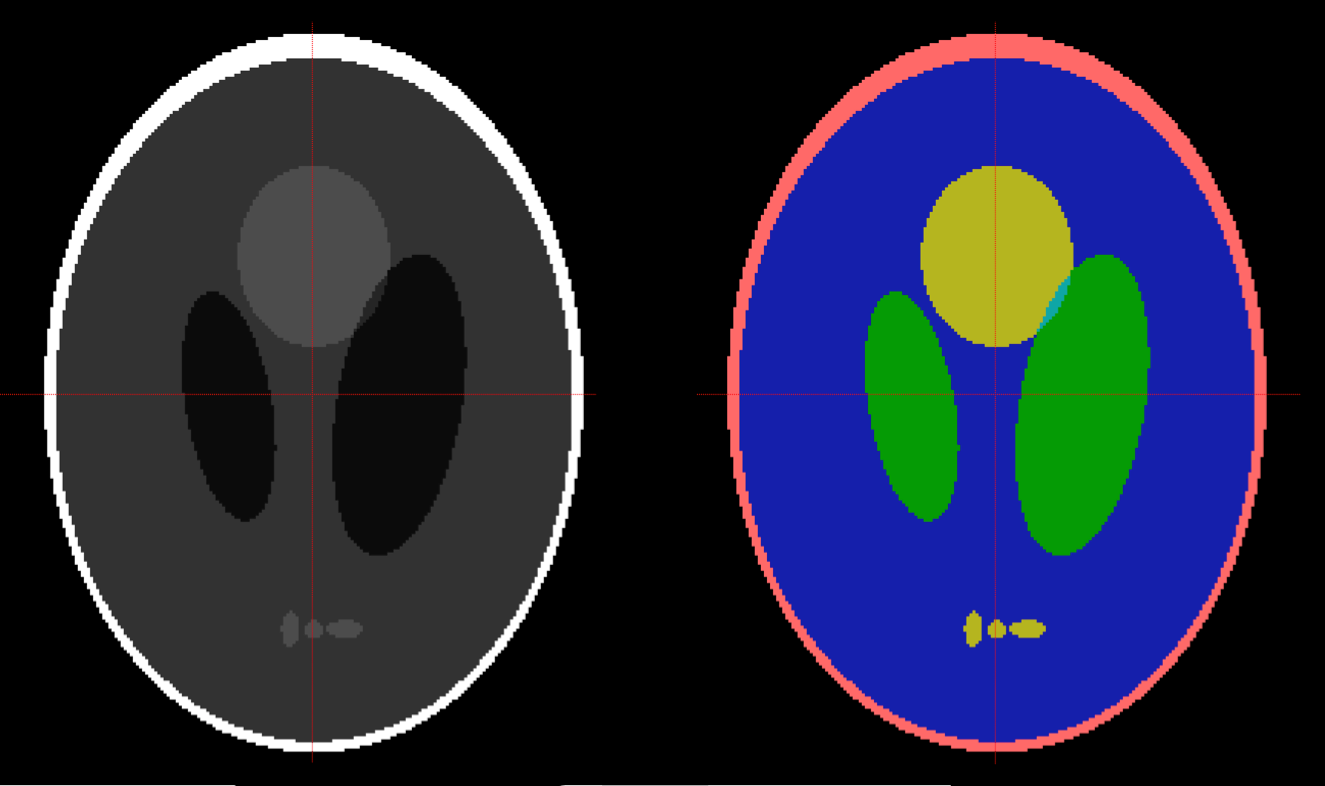}
    \caption{Left: an axial slice of the Shepp-Logan phantom. Right: segmentation of the same slice, with the segments colored as follows: black segment number 0, red 1, green 2, blue 3, yellow 4, cyan 5.}
    \label{fig:shepplogan}
\end{figure}

The second test is from a real PET/CT dataset of scans of the same physical phantom. This phantom is a closed container made of plastic. Inside the containers six spheres of diameters $10, 13, 17, 22, 28$ and $37\,mm$  are filled with the tracer of known activity (we call `hot' or foreground the interior of the spheres), whereas the remaining part of the phantom (here called `cold' or background) is filled with water and tracer of known activity.

In both experiments we compared the results obtained by oversampling the functional image using the trilinear interpolation basis, cfr.~\cite{burger_burge_2009} in combination with the Fake Nodes scheme explained in the previous paragraph. In both cases the resampling is performed in ANTs~\cite{AVANTS2008, Avants2011}, a popular software suite in neuroimaging written on top of ITK~\cite{itk_2014}. The Fake Nodes scheme is implemented in a set of Python scripts. All the scripts written for this paper and some sample data can be found in the repository of this paper \url{https://github.com/pog87/FakeResampling3D}.\\
Another package that can be useful to understand image resizing can be found at~\cite{ResizeRight}.

\subsection{Experiment 1: Shepp-Logan phantom}

We generated two Shepp-Logan phantom images: an high resolution image of size $256^3$ and a low resolution image of $128^3$ voxels with the Python packages {\tt tomopy} and {\tt nibabel}~\cite{Brett2020}.
Since the Shepp-Logan phantom is picewise constant, segmentation has been performed simply by identifying the voxels of the same values. The phantom and its segmentation can be observed in Fig.~\ref{fig:shepplogan}. The low resolution `functional' image was resampled to the high resolution using Trilinear interpolation and Fake Nodes Trilinear interpolation. The Fake Image generated for resampling can be observed in Fig.~\ref{fig:fakenodes}. At last, the mean value per segment has been computed and compared with the actual, constant value of the segment. The results can be observed in Fig.~\ref{fig:fakeresults} and Table~\ref{tab_shepp}. As can be easily seen, the Fake Nodes resampling leads to more accurate mean value for every segment, including the background.

\begin{figure}[!hbt]
    \centering
    \includegraphics[width=\linewidth]{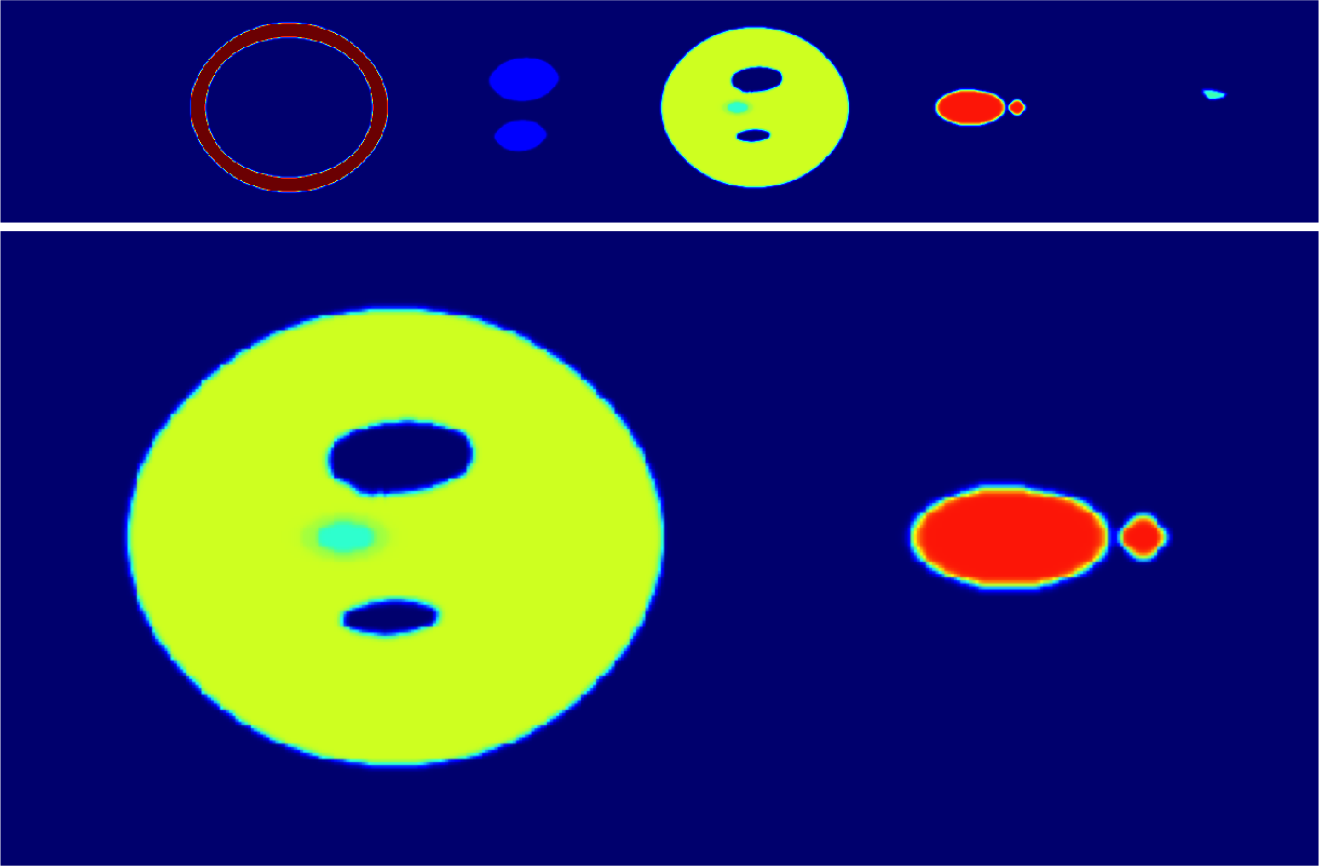}
    \caption{Top: a Fake Image to be resampled, obtained from the low resolution image with Algorithm~\ref{FakeNodesAlg}. Bottom: a zoom of the same, showing that the signal is smooth. The jet colormap (blue-green-yellow-red) has been used to better represent the smoothness of the signal.}
    \label{fig:fakenodes}
\end{figure}

\begin{figure}[!hbt]
    \centering
    \includegraphics[width=\linewidth]{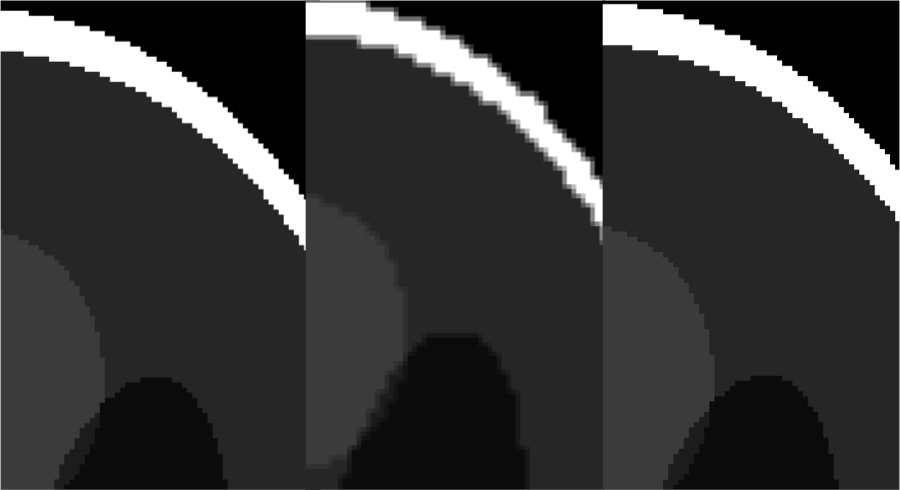}
    \caption{Left: a zoom of an axial slice of the high resolution Shepp-Logan phantom. Center: the same slice oversampled with Trilinear interpolation. Right: the same slice oversampled with Trilinear interpolation and Fake Nodes.}
    \label{fig:fakeresults}
\end{figure}

\begin{table}[!h]
\begin{tabular}{|c||l||l l||l l|}
\hline
 Segment &   Ref. &  Trilinear && Fake-Trilinear &\\ \cline{3-6}
   &  &  mean value  &  abs. error   &   mean value  &  abs. error   \\ \hline
 0 & 0.  &    $1.979\cdot 10^{-3}$  &     $1.979\cdot 10^{-3}$  &    $0.986\cdot 10^{-16}$  &    $0.986\cdot 10^{-16}$  \\
 1 &1.   & 0.823  &  0.177  &  1.000  &     $0.9333\cdot10^{-4}$  \\
2  &  0.05  &   $0.569 \cdot 10^{-1}$  &$0.686\cdot10^{-2}$  &     $0.500\cdot 10^{-1}$  &     $1.379\cdot 10^{-6}$  \\
3  &0.2  & 0.216  &     $0.157 \cdot 10^{-1}$ &$0.200$  &$0.204\cdot10^{-5}$  \\
4  &0.3  & 0.296  &$0.390\cdot10^{-2}$  & $0.300$ & $0.797\cdot10^{-5}$ \\
5   &0.15 &    0.160   &   $0.995\cdot10^{-2}$  &   $1.499\cdot10^{-1}$   &   $0.608\cdot10^{-4}$  \\ \hline
\end{tabular}

\label{tab_shepp}
\caption{Segment index, Reference value, mean value and absolute error per segment relative to the oversamplings of the Shepp-Logan phantom with Trilinear interpolation and Trilinear interpolation with Fake Nodes.}
\end{table}

\subsection{Experiment 2: PET/CT phantom data}
The RIDER PET/CT Phantom dataset~\cite{rider_phantom} has been downloaded from \hyperlink{https://wiki.cancerimagingarchive.net/display/Public/RIDER+Phantom+PET-CT}{the dataset official webpage} hosted on the Cancer Imaging Archive (TCIA)~\cite{Clark2013} website. This dataset consists in 20 repeated scans of the RIDER PET/CT Phantom filled with Ge68 tracer and reconstructed using 3D Filtered Back Projection (FBP)~\cite{Jha2014}. We know that in principle  the foreground-to-background ratio should be exactly equal to 4, as the interior of the spheres has been filled with a radioactive tracer of activity four times higher than the water lying outside them.\\ The CT scan has been automatically segmented using k-means clustering in four segments: background, phantom plastic shell, `cold' water and `hot' spheres. A slice of a phantom scan can be found in~\ref{fig:rider}. The foreground to background ratios have been computed as
\[ FBr = \frac{PET[hot]}{PET[cold]}\]
for both the Trilinear and Fake Nodes Trilinear interpolation methods. In both cases a Partial Volume Correction (PVC) method~\cite{Muller1992} was applied to resampled PET before computing the mean value per segment, with an estimated Full Width at Half Maximum (FWHM)~\cite{Cecchin2015} of $6mm$.\\
The results of this experiment can be observed in the Raincloud plot~\cite{Allen2019} at Fig.~\ref{fig:riderres}. The Fake Nodes approach results in a mean error reduced of about $14\%$ with respect to the usual Trilinear resampling. The t-test between the two groups gives a p-value of about $ p \approx 0.7\cdot 10^{-8}$, indicating that the difference between the two groups is significant.

\begin{figure}[!hbt]
    \centering
    \includegraphics[width=\linewidth]{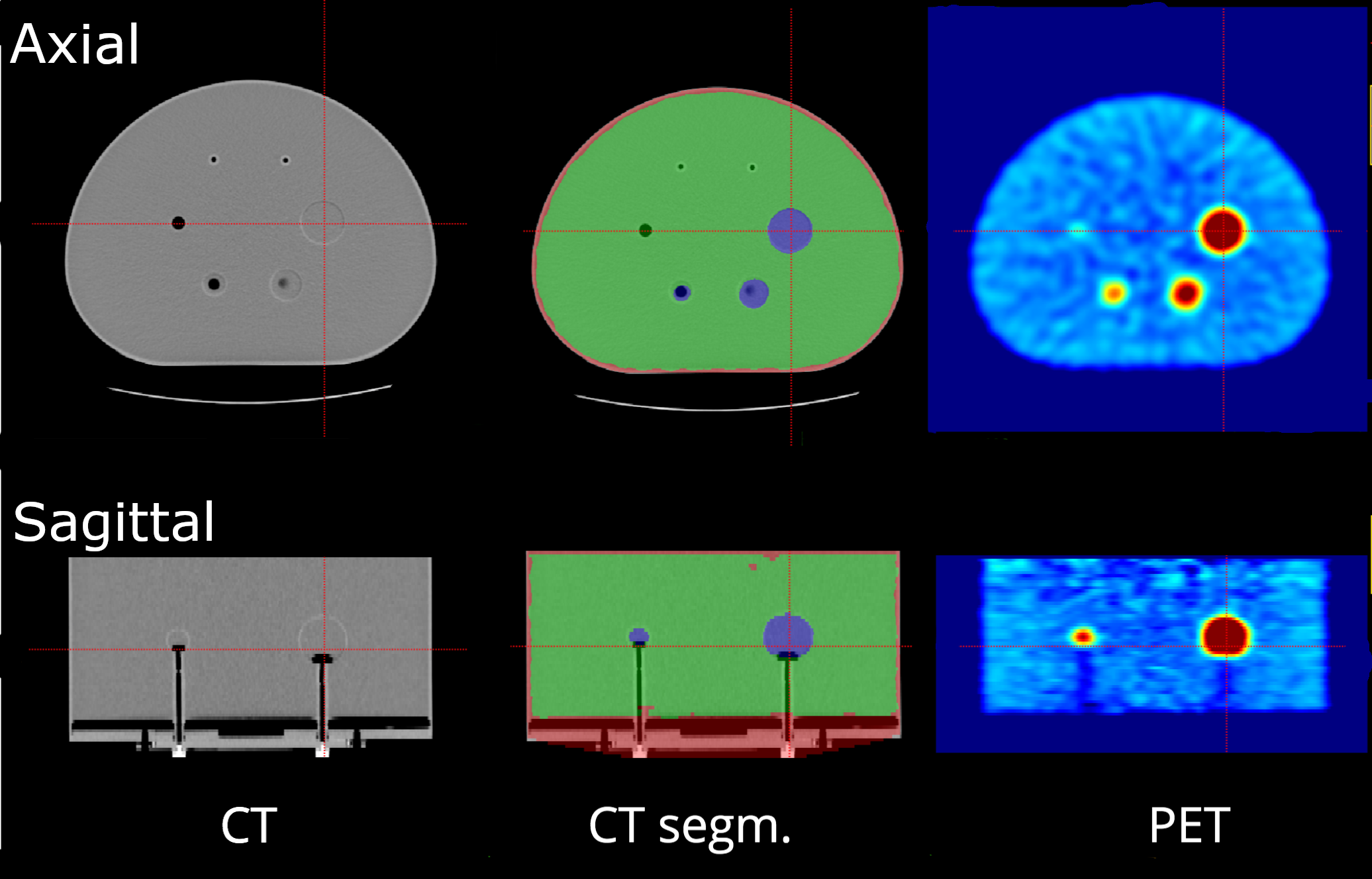}
    \caption{The same axial (top) and sagittal (bottom) slices of the the PET RIDER phantom CT (left), CT segmentation (center) and PET scan (right). The segmentation indexes are: background (black) 0, phantom shell (red) 1, `cold' water (green) 2, `hot' spheres (violet) 3. The jet colormap has been used for PET images}
    \label{fig:rider}
\end{figure}

\begin{figure}[!hbt]
    \centering
    \includegraphics[width=\linewidth]{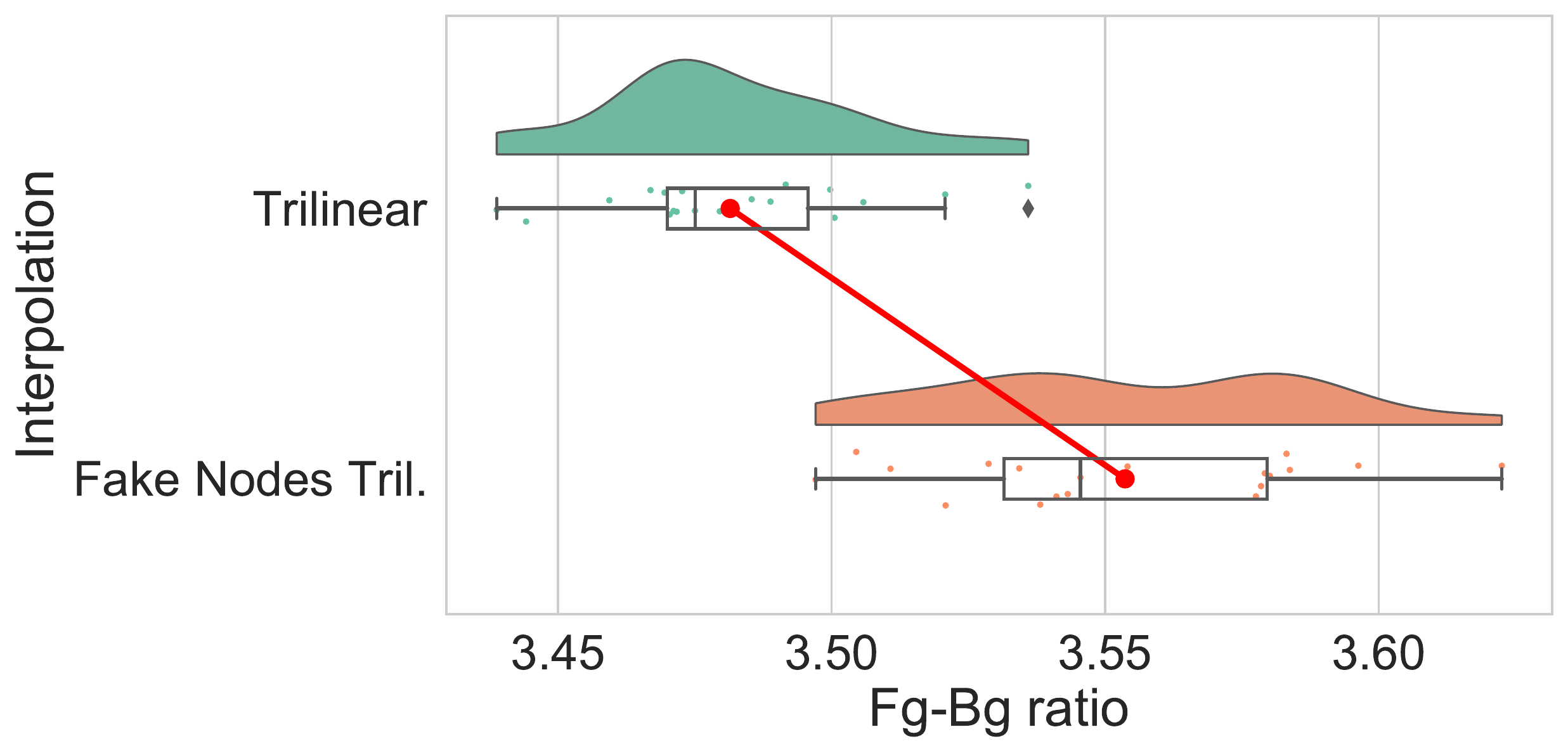}
    \caption{Results of the RIDER phantom experiments, pictured as Raincloud plots. Comparison between foreground to background ratios obtained using Trilinear interpolation (top, green) and Fake Nodes Trilinear interpolation (bottom, orange).}
    \label{fig:riderres}
\end{figure}

\subsection{Comments to the results}
The first experiment shows that resampling a functional image with the Fake Nodes approach results in an almost perfect set of mean values for each segment, with an impressive improvement with respect to the traditional image resampling. In a less controlled and more realistic situation, the use of Fake Nodes in resampling does lead to a significant improvement in the ratios, but still the result appears not close enough to the target value of 4. These results are coherent with the dataset documentation~\cite{rider_phantom}, indicating that an exact quantification of the tracer concentration in PET is still far from being achieved.

\section{Conclusions and future work}

In this paper we have proposed a Fake Nodes scheme for medical image oversampling that can be applied to any image of arbitrary dimension. In Sections 2 and 3, we introduced image resampling techniques as interpolatory methods and gave an error bound which is different in the cases of continuous and piecewise continuous signals. Fake Nodes approach has proved to bring a more accurate functional quantification in the experiments, both using an analytically-defined phantom and real data from the scans of a PET/CT physical phantom.\\
The results on the physical phantom appeared to be still too far from the target. This may indicate that the Partial Volume Correction, Attenuation Correction and a Fake Nodes-like oversampling should be all applied during the reconstruction phase in order to achieve an exact quantification in PET imaging.

\section{Acknowledgments}
This research has been funded by the PNC - Padova Neuroscience Center, University of Padova (Italy) as part of the project ``A computational tool for neurodegenerative stratification using PET/RM''.\\
This research has been accomplished within Rete ITaliana di Approssimazione (RITA) and partially funded by GNCS-IN$\delta$AM.

\bibliography{mybibfile}

\end{document}